\DeclareMathOperator{\Ima}{Im}
\DeclareMathOperator{\Span}{span}
\DeclareMathOperator{\Sign}{sgn}
\theoremstyle{plain}
\newtheorem{theorem}{Theorem}[section]
\newtheorem{lemma}[theorem]{Lemma}
\newtheorem{corollary}[theorem]{Corollary}
\theoremstyle{remark}
\newtheorem{remark}[theorem]{Remark}
\theoremstyle{definition}
\newtheorem{definition}[theorem]{Definition}
\begin{document}

\title{Flat approximations of hypersurfaces along curves}

\author{Irina Markina}
\address{Department of Mathematics\\
University of Bergen\\
5020 Bergen\\
Norway}
\email{irina.markina@uib.no}

\author{Matteo Raffaelli}
\address{DTU Compute\\
Technical University of Denmark\\
2800 Kongens Lyngby\\
Denmark}
\email{matraf@dtu.dk}

\date{\today}
\begin{abstract}
Given a smooth curve $\gamma$ in some $m$-dimensional surface $M$ in $\mathbb{R}^{m+1}$, we study existence and uniqueness of a flat surface $H$ having the same field of normal vectors as $M$ along $\gamma$, which we call a flat approximation of $M$ along $\gamma$. In particular, the well-known characterisation of flat surfaces as torses (ruled surfaces with tangent plane stable along the rulings) allows us to give an explicit parametric construction of such approximation.
\end{abstract}
\maketitle

\section{Introduction and Main Result}
\noindent Developable, or flat, hypersurfaces in $\mathbb{R}^{m+1}$, where $m\geq 2$, are classical objects in Riemannian geometry. They are characterised by being foliated by open subsets of $(m-1)$-dimensional planes, called rulings, along which the tangent space remains stable \cite[Theorem~1]{ushakov1999}. Here we are concerned with the problem of existence and uniqueness---as well as with the explicit construction---of flat approximations of hypersurfaces along curves. Let $M^{m}$ be a (possibly curved) Euclidean hypersurface and $\gamma$ a curve in $M^{m}$. A hypersurface $H$ is called an \textit{approximation of} $M^{m}$ \textit{along} $\gamma$ if the two manifolds have common tangent space at every point of $\gamma$.

In dimension $2$, the question of existence has been settled for a long time. A constructive proof, under suitable assumptions, is already present in Do Carmo's textbook \cite[p.~195--197]{docarmo1976}. It turns out the existence of a flat approximation of $M^{2}$ along $\gamma$ implies the existence of a rolling, in Nomizu's sense, of $M^{2}$ on the tangent space $T_{\gamma(0)}M^{2}$ along the given curve -- see \cite{nomizu1978} and \cite{raffaelli2016}. More recently, Izumiya and Otani have shown uniqueness \cite[Corollary~6.2]{izumiya2015}. 

In this paper, we extend the result in \cite{docarmo1976} to any curve in $M^{m}$. More precisely, we shall present a constructive proof of the following
\begin{theorem} \label{mainresult}
Let $\gamma \colon I \to M^{m}$ be a smooth curve in a hypersurface $M^{m}$ in $\mathbb{R}^{m+1}$. If the curve is never parallel to an asymptotic direction of $M^{m}$, then there exists a flat approximation $H$ of $M^{m}$ along $\gamma$. Such hypersurface is unique in the following sense: if $H_{1}$ and $H_{2}$ are two flat approximations of $M^{m}$ along $\gamma$, then they agree on an open set containing $\gamma(I)$.
\end{theorem}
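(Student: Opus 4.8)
The plan is to construct $H$ as the envelope of the family of affine tangent hyperplanes of $M^{m}$ along $\gamma$, and then to show that this envelope is the unique flat approximation. First I would fix a smooth unit normal field $N\colon I \to \mathbb{R}^{m+1}$ of $M^{m}$ along $\gamma$ and consider, for each $t \in I$, the affine tangent hyperplane $\Pi(t) = \{x \in \mathbb{R}^{m+1} : \langle x - \gamma(t), N(t)\rangle = 0\}$. The characteristic (ruling) is obtained by differentiating the defining equation: since $\langle \gamma'(t), N(t)\rangle = 0$, a point $x \in \Pi(t)$ lies on the characteristic precisely when $\langle x - \gamma(t), N'(t)\rangle = 0$ as well. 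I therefore set
\[
R(t) = \gamma(t) + \{N(t), N'(t)\}^{\perp},
\]
the orthogonal complement being taken in $\mathbb{R}^{m+1}$. The hypothesis that $\gamma$ is nowhere parallel to an asymptotic direction enters here: writing $N'(t) = -S(\gamma'(t))$ for the shape operator $S$ of $M^{m}$, one has $\langle \gamma'(t), N'(t)\rangle = -\langle S(\gamma'(t)), \gamma'(t)\rangle \neq 0$, so in particular $N'(t) \neq 0$ and $R(t)$ is a genuine $(m-1)$-plane. The candidate approximation is $H = \bigcup_{t \in I} R(t)$.

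Next I would verify that $H$ is a flat approximation. Choosing a smooth orthonormal frame $e_{1}(t), \dots, e_{m-1}(t)$ of $\{N(t), N'(t)\}^{\perp}$, I parametrise $H$ by $\phi(t, s) = \gamma(t) + \sum_{i} s_{i} e_{i}(t)$. A short computation shows that $N(t)$ is orthogonal to every partial derivative $\partial_{s_{i}}\phi = e_{i}(t)$ and to $\partial_{t}\phi = \gamma'(t) + \sum_{i} s_{i} e_{i}'(t)$; the latter holds because $\langle N, e_{i}'\rangle = -\langle N', e_{i}\rangle = 0$. Hence $N(t)$ is a normal field of $H$ that is constant along each ruling $R(t)$, so $H$ is a torse and therefore flat by the cited characterisation, and its normal agrees with that of $M^{m}$ along $\gamma$, yielding the approximation property. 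To see that $\phi$ is an immersion near $s = 0$, and hence that $H$ is an embedded hypersurface on an open set containing $\gamma(I)$, I note that at $s = 0$ the image of $d\phi$ is $\Span\{\gamma'(t), e_{1}(t), \dots, e_{m-1}(t)\}$; since $\langle \gamma'(t), N'(t)\rangle \neq 0$ while every $e_{i}$ is orthogonal to $N'(t)$, the vector $\gamma'(t)$ is independent of the $e_{i}$, and these $m$ vectors are linearly independent. This is exactly where the nowhere-asymptotic hypothesis is indispensable.

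Finally, for uniqueness, let $H_{1}$ and $H_{2}$ be flat approximations. Each $H_{j}$ is a torse, so through $\gamma(t)$ passes a ruling $\rho_{j}(t)$, an $(m-1)$-plane along which the tangent hyperplane of $H_{j}$ is stable and hence equal to $\Pi(t)$. I would show $\rho_{j}(t) = R(t)$ as follows: pick any smooth curve $c(t) \in \rho_{j}(t)$; then $c(t) \in \Pi(t)$ gives $\langle c(t) - \gamma(t), N(t)\rangle = 0$, and differentiating, while using $c'(t) \in N(t)^{\perp}$ and $\langle \gamma'(t), N(t)\rangle = 0$, yields $\langle c(t) - \gamma(t), N'(t)\rangle = 0$. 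Thus $c(t) - \gamma(t) \in \{N(t), N'(t)\}^{\perp}$, so $\rho_{j}(t) \subseteq R(t)$, and equality follows by dimension. Consequently $H_{1}$ and $H_{2}$ share the rulings $R(t)$ and both coincide with $H = \bigcup_{t} R(t)$ on an open neighbourhood of $\gamma(I)$.

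The step I expect to be the main obstacle is uniqueness: one must argue that a flat approximation, a priori merely an abstract torse passing through $\gamma$, has its rulings completely pinned down by the tangency data. The differentiation argument above is the crux, and it relies essentially on the torse property, namely constancy of the tangent plane along rulings, together with the nowhere-asymptotic hypothesis ensuring that $N$ and $N'$ span a $2$-plane, so that $R(t)$ has the correct dimension $m-1$.
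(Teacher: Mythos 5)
Your proposal is correct in outline, but it takes a genuinely different route from the paper. The paper fixes a $\gamma$-adapted orthonormal frame $(E_{1},\dotsc,E_{m})$, encodes candidate ruling distributions as equivalence classes $[X_{1},\dotsc,X_{m-1}]$ of tuples of vector fields along $\gamma$, and uses its vector-cross-product machinery to turn the flatness criterion $\dot{\gamma}\cdot\overline{D}_{t}X_{j}\times X_{1}\times\dotsb\times X_{m-1}=0$ into the linear system $Z^{j}\tau_{1}-Z^{1}\tau_{j}=0$; the solution $Z^{i}=\tau_{i}$ then yields explicit ruling fields $X_{j}=(-1)^{j}\tau_{m-j+1}E_{1}+(-1)^{j-1}\tau_{1}E_{m-j+1}$. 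Your envelope-of-tangent-hyperplanes construction produces the same object: since the unit normal along $\gamma$ satisfies $N'=-\tau_{1}E_{1}-\dotsb-\tau_{m}E_{m}$, your ruling plane $\{N(t),N'(t)\}^{\perp}$ is exactly $\Span(X_{1},\dotsc,X_{m-1})$, and your hypothesis $\langle\gamma',N'\rangle\neq0$ is exactly the paper's condition $\tau_{1}\neq0$. Your derivation is coordinate-free and shorter, and it identifies the rulings geometrically (as the kernel of the shape operator); the paper's computation, in exchange, delivers an explicit parametric formula in an arbitrary adapted frame---its stated aim---and its equivalence-class bookkeeping deals cleanly with the many-to-one correspondence between tuples and distributions. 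Your uniqueness argument, differentiating the tangency relation along curves in the rulings, is in fact more explicit than the paper's, which reduces uniqueness to the absence of planar points and then appeals to Remark \ref{DS-RMK3}.

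Two steps in your write-up need shoring up. First, in the uniqueness part you assert that each $H_{j}$ \emph{is} a torse, so that a ruling passes through $\gamma(t)$. By Corollary \ref{DS-COR}, flatness only makes $H_{j}$ a \emph{generalised} torse, which may contain planar points and planar pieces, where rulings are not unique (an $m$-plane is foliated by parallel $(m-1)$-planes in many ways); this is precisely where uniqueness would fail without the curvature hypothesis. You must first exclude planar points along $\gamma$: since $T_{p}H_{j}=T_{p}M$ at $p=\gamma(t)$, the unit normals of $H_{j}$ and $M$ agree up to sign along $\gamma$, hence $\overline{D}_{t}N_{H_{j}}=\pm N'\neq0$ and the second fundamental form of $H_{j}$ is nonzero at every $\gamma(t)$. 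This observation---the core of the paper's own uniqueness argument---is what legitimises both the existence of a ruling through $\gamma(t)$ and the smooth ruled structure you implicitly use when choosing the smooth curves $c(t)$ that you differentiate. You have the needed ingredient ($N'\neq0$) but never apply it to $H_{j}$. Second, the inference from ``$\phi$ is an immersion near $s=0$'' to ``$H$ is an embedded hypersurface on an open set containing $\gamma(I)$'' is not automatic; the paper proves this separately in its embedding lemma in Section 4, using compactness of $I$ and injectivity of $\gamma$. Both gaps are repairable and comparable to details the paper itself treats briskly; the overall architecture of your proof is sound.
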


The strategy to prove this result involves looking for $(m-1)$-tuples of linearly independent vector fields $(X_{1},\dotsc, X_{m-1})$ along $\gamma$ satisfying $\dot{\gamma}(t) \notin \Span(X_{j}(t))_{j\,=\,1}^{m-1}$ for all $t$ and having zero \emph{normal} derivative (normal projection of Euclidean covariant derivative). Indeed, such conditions guarantee the image of the map $\gamma + \Span(X_{j})_{j\,=\,1}^{m-1}$ be a flat hypersurface of $\mathbb{R}^{m+1}$ in a neighbourhood of $\gamma$. The main difficulty resides in getting around the many-to-one correspondence between tuples of vector fields and rank-$(m-1)$ distributions along $\gamma$.

It is worth pointing out that the solution depends on the original hypersurface $M^{m}$ only through its distribution of tangent planes along $\gamma$. Thus, when $m=2$, our problem is nothing but the classical Bj\"{o}rling's problem---to find all minimal surfaces passing through a given curve with prescribed tangent planes---addressed to a different class of surfaces. In this respect, the present work joins several other recent studies aimed at solving Bj\"{o}rling-type questions, see \cite{brander2018,brander2013} and references therein.

The paper is organised as follows. The next two sections present some preliminaries, mostly for the sake of introducing relevant notation and terminology. In Section $4$ we derive a simple condition for discerning when a parametrised ruled hypersurface has a flat metric. Such condition is then used in Section $5$ to prove the main theorem. Finally, in Section $6$ we give some general remarks about the construction of the approximation.

\section{Vector Cross Products}
\noindent Let $V$ be an $n$-dimensional, real vector space equipped with a positive definite inner product $\langle\cdot{,}\cdot\rangle$. In the following, $V^{k}$ will indicate the $k$-th Cartesian power of $V$, and $L^{k}(V)$ the set of all multilinear maps from $V^{k}$ to $V$. Note that, under pointwise addition and scalar multiplication, $L^{k}(V)$ is a \emph{finite dimensional} vector space, in that it is naturally isomorphic to the space $T^{(1,k)}(V)$ of tensors on $V$ of type $(1,k)$ -- see for example \cite[Lemma~2.1]{lee1997}. Thus, $\dim L^{k}(V) = n^{k+1}$.

A $k$-\textit{fold vector cross product on} $V$, $1 \leq k \leq n$, is an element of $L^{k}(V)$---i.e., a multilinear map $ X \colon V^{k} \to V$---satisfying the following two axioms:
\begin{align*}
&\langle X(v_{1}, \dotsc, v_{k}), v_{i} \rangle = 0 \, , \quad 1 \leq i \leq k \, . \\
&\langle X(v_{1}, \dotsc, v_{k}), X(v_{1}, \dotsc, v_{k}) \rangle = \det (\langle v_{i}, v_{j} \rangle) \, .
\end{align*}
We emphasize that the second axiom implies any such $X$ being alternating.

In particular, in the case $V$ carries an orientation $\mathcal{O}$, we say that an $(n-1)$-fold vector cross product $X$ is \textit{positively oriented} if the following condition holds for all $(n-1)$-tuples of linearly independent vectors $v_{1}, \dotsc, v_{n-1}$:
\begin{equation*}
\left(v_{1}, \dotsc, v_{n-1}, X(v_{1}, \dotsc, v_{n-1})\right) \in \mathcal{O} \, .
\end{equation*}
Analogously, a \textit{negatively oriented} vector cross product satisfies the same relation with $-\mathcal{O}$ in place of $\mathcal{O}$.

In \cite{brown1967}, Brown and Gray proved the following theorem:
\begin{theorem} \label{VCP-TH1}
Let $V$ be an oriented finite dimensional inner product space, of dimension $n$. There exists a unique positively oriented $(n-1)$-fold vector cross product $X = \cdot \times \dotsb \times \cdot$ on $V$. It is given by:
\begin{equation*}
v_{1} \times \dotsb \times v_{n-1} = \star (v_{1} \wedge \dotsb \wedge v_{n-1}) \, 
\end{equation*}
where $\star$ is the Hodge star operator on $V$.
\end{theorem}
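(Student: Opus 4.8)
The plan is to prove Theorem~\ref{VCP-TH1} in two stages: first establish that the Hodge-star formula $v_{1} \times \dotsb \times v_{n-1} = \star(v_{1} \wedge \dotsb \wedge v_{n-1})$ defines a positively oriented $(n-1)$-fold vector cross product, and then argue uniqueness. Throughout I would fix a positively oriented orthonormal basis $(e_{1}, \dotsc, e_{n})$ of $V$, which determines both the inner product (via orthonormality) and the volume form $\omega = e_{1}^{\flat} \wedge \dotsb \wedge e_{n}^{\flat}$ defining the Hodge star. Multilinearity of the proposed map is immediate, since both the wedge product $\Lambda^{n-1}V \to \Lambda^{n-1}V$ in each argument and the Hodge star $\star \colon \Lambda^{n-1}V \to \Lambda^{1}V \cong V$ are linear.

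To verify the first axiom, I would compute $\langle \star(v_{1} \wedge \dotsb \wedge v_{n-1}), v_{i} \rangle$ and recognise it, via the defining property of the Hodge star $\alpha \wedge \star\beta = \langle \alpha, \beta\rangle\, \omega$, as proportional to $v_{i}^{\flat} \wedge (v_{1} \wedge \dotsb \wedge v_{n-1})$, which vanishes because $v_{i}$ already appears among the $v_{j}$. For the second axiom, the cleanest route is to reduce to the case where $(v_{1}, \dotsc, v_{n-1})$ is an orthonormal family: both sides are invariant under replacing the tuple by an orthonormal basis of its span (the Gram determinant $\det(\langle v_{i}, v_{j}\rangle)$ and the squared norm of the cross product transform identically under the change of basis, each by the square of the determinant of the transition matrix). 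In the orthonormal case one extends to a positively oriented orthonormal basis and checks directly that $\star(e_{1} \wedge \dotsb \wedge e_{n-1}) = e_{n}$, giving unit norm to match $\det I = 1$; the orientation claim $(v_{1}, \dotsc, v_{n-1}, X) \in \mathcal{O}$ follows from the same normal-form computation.

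For uniqueness, suppose $X'$ is another positively oriented $(n-1)$-fold vector cross product. I would show $X' = X$ pointwise on linearly independent tuples, and then extend by continuity (or by noting that on linearly dependent tuples both sides vanish by the alternating property forced by the axioms). Given linearly independent $v_{1}, \dotsc, v_{n-1}$, the first axiom forces both $X(v_{1}, \dotsc, v_{n-1})$ and $X'(v_{1}, \dotsc, v_{n-1})$ to lie in the one-dimensional orthogonal complement of $\Span(v_{1}, \dotsc, v_{n-1})$; the second axiom fixes their common length; and the positive-orientation condition fixes the sign. Hence the two vectors coincide.

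The main obstacle I anticipate is the bookkeeping in the second axiom: matching the Gram determinant to the squared norm of the Hodge dual in full generality. The key simplification is the reduction to an orthonormal frame, which replaces a determinant identity for arbitrary vectors by a transparent computation $\star(e_{1} \wedge \dotsb \wedge e_{n-1}) = e_{n}$; establishing cleanly that both sides of the second axiom scale by the same factor under a change of spanning set is the technical heart of the argument. Since the statement is attributed to Brown and Gray~\cite{brown1967}, I would expect the paper to cite it rather than reproduce this computation in detail, but the sketch above is how I would supply the proof if required.
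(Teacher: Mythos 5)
The paper never proves Theorem~\ref{VCP-TH1}: it is stated as a quoted result of Brown and Gray \cite{brown1967}, exactly as you anticipated in your closing paragraph, so there is no internal proof to compare against. Your blind sketch is nonetheless a correct and essentially complete outline of the standard argument. Multilinearity, the orthogonality axiom via the pairing $\alpha \wedge \star\beta = \langle \alpha, \beta \rangle\,\omega$ (which exhibits $\langle \star(v_{1}\wedge\dotsb\wedge v_{n-1}), v_{i}\rangle$ as a multiple of $v_{i}^{\flat}\wedge v_{1}^{\flat}\wedge\dotsb\wedge v_{n-1}^{\flat} = 0$), and the norm axiom via reduction to an orthonormal spanning set all go through; the one point deserving an explicit line is the orientation claim for an arbitrary linearly independent tuple, where you should note that if $(v_{1},\dotsc,v_{n-1}) = (u_{1},\dotsc,u_{n-1})A$ for an orthonormal basis $(u_{i})$ of the span, then the transition matrix from $\bigl(u_{1},\dotsc,u_{n-1},\star(u_{1}\wedge\dotsb\wedge u_{n-1})\bigr)$ to $\bigl(v_{1},\dotsc,v_{n-1},\star(v_{1}\wedge\dotsb\wedge v_{n-1})\bigr)$ is block diagonal with determinant $(\det A)^{2}>0$, so positivity of the orientation is preserved whatever the sign of $\det A$; this closes the gap you glossed over with ``follows from the same normal-form computation.'' Your uniqueness argument is exactly the right one: the first axiom confines the value to the line $\Span(v_{1},\dotsc,v_{n-1})^{\perp}$, the second fixes its length, positive orientation fixes the sign, and on linearly dependent tuples the second axiom forces both candidate maps to vanish, since the Gram determinant is then zero.
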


We now turn our attention to manifolds. If $M$ is a smooth Riemannian manifold of dimension $m$, let $L^{k}\mathit{TM}$ be the disjoint union of all the vector spaces $L^{k}(T_{p}M)$:
$$ L^{k}\mathit{TM} = \bigsqcup_{p \in M} L^{k}(T_{p}M) \, .$$

Clearly, for $L^{k}(T_{p}M) \cong T^{(1,k)}(T_{p}M)$, the set $L^{k}\mathit{TM}$ has a canonical choice of topology and smooth structure turning it into a smooth vector bundle of rank $m^{k+1}$ over $M$. We define a $k$-\textit{fold vector cross product on} $M$, where $1 \leq k \leq m$, to be a smooth section $X$ of $L^{k}\mathit{TM}$ such that, for every point $p \in M$, the map $X_{p}$ is a $k$-fold vector cross product on $T_{p}M$.

We thus have the following corollary of Theorem \ref{VCP-TH1}:
\begin{corollary}
Let $M$ be a smooth oriented $m$-dimensional Riemannian manifold. There exists a unique $(m-1)$-fold positively oriented vector cross product on $M$. It acts on $(m-1)$-tuples of vector fields $X_{1}, \dotsc, X_{m-1}$ on $M$ by
\begin{equation*}
X_{1} \times \dotsb \times X_{m-1} = \star (X_{1} \wedge \dotsb \wedge X_{m-1}) \,. 
\end{equation*}
\end{corollary}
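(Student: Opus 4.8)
The plan is to obtain the statement by applying Theorem \ref{VCP-TH1} fiberwise and then promoting the resulting family of pointwise cross products to a smooth section. Since $M$ is oriented and carries a Riemannian metric, each tangent space $T_{p}M$ is an oriented $m$-dimensional inner product space; hence Theorem \ref{VCP-TH1} furnishes, for every $p \in M$, a unique positively oriented $(m-1)$-fold vector cross product $X_{p}$ on $T_{p}M$, given by $X_{p}(v_{1},\dotsc,v_{m-1}) = \star_{p}(v_{1} \wedge \dotsb \wedge v_{m-1})$. This assigns to each $p$ an element $X_{p} \in L^{m-1}(T_{p}M)$, i.e.\ a set-theoretic section of $L^{m-1}\mathit{TM}$, whose fiberwise satisfaction of the two vector cross product axioms is guaranteed by the theorem. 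Two things then remain: to check that this section is smooth (so that it is genuinely a vector cross product on $M$ in the sense defined above), and to establish global uniqueness.

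Uniqueness I expect to be immediate. Suppose $X$ and $X'$ are both $(m-1)$-fold positively oriented vector cross products on $M$, and fix $p \in M$. By definition, $X_{p}$ and $X'_{p}$ are each positively oriented $(m-1)$-fold vector cross products on the oriented inner product space $T_{p}M$, so the uniqueness clause of Theorem \ref{VCP-TH1} forces $X_{p} = X'_{p}$. As $p$ is arbitrary, $X = X'$ as sections.

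For existence the only nontrivial point is smoothness, and here I would lean on the displayed formula $X_{1} \times \dotsb \times X_{m-1} = \star(X_{1} \wedge \dotsb \wedge X_{m-1})$. The right-hand side is a composition of bundle operations that are each smooth on an oriented Riemannian manifold: the wedge $X_{1} \wedge \dotsb \wedge X_{m-1}$ is a smooth section of $\Lambda^{m-1}\mathit{TM}$ whenever the $X_{j}$ are smooth, and the Hodge star $\star \colon \Lambda^{m-1}\mathit{TM} \to \mathit{TM}$ is a smooth bundle isomorphism, being determined fiberwise by the smoothly varying metric and orientation. Consequently $X_{1} \times \dotsb \times X_{m-1}$ is a smooth vector field, and since this holds for all smooth inputs the induced section of $L^{m-1}\mathit{TM}$ is smooth. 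Equivalently, one may work in a local positively oriented orthonormal frame and observe that the components of $\star(X_{1} \wedge \dotsb \wedge X_{m-1})$ are smooth (indeed polynomial) functions of the components of the $X_{j}$ relative to that frame.

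I expect the smoothness verification to be the only step demanding any care, since everything else is delivered directly by the fiberwise theorem; and even that step is routine once the smoothness of the Hodge star on an oriented Riemannian manifold is invoked. In effect, the genuine content has already been absorbed into Theorem \ref{VCP-TH1}, so the corollary amounts to a globalisation argument: pointwise existence and uniqueness plus smoothness of the canonical formula.
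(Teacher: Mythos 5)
Your proposal is correct and follows essentially the same route as the paper, which states this corollary without further argument precisely because it is the fiberwise application of Theorem \ref{VCP-TH1} together with the (routine) smoothness of $\star(X_{1} \wedge \dotsb \wedge X_{m-1})$ on an oriented Riemannian manifold. Your write-up simply makes explicit the pointwise existence/uniqueness step and the smoothness check that the paper leaves implicit.
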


\section{Frames Along Curves}
\noindent In this section we review some basic facts about Euclidean submanifolds and orthonormal frames along curves.

Let us start with some notation. If $m\geq2$, let $M$ be an $m$-dimensional embedded submanifold of $\mathbb{R}^{d}$, and $\gamma \colon I=[0,\alpha] \to M$ a smooth regular curve in $M$. Throughout this paper, $\mathbb{R}^{d}$ will always be equipped with the standard Euclidean metric $\overline{g}$, typically indicated by a dot ``$\,\cdot\,$'', and standard orientation. Thus, there is a natural choice of Riemannian metric on $M$: the induced metric $\iota^{\ast}\overline{g}$, i.e., the pullback of $\overline{g}$ by the inclusion $\iota \colon M \hookrightarrow \mathbb{R}^{d}$.

Working with submanifolds, it is customary to identify each tangent space $T_{p}M$ with its image under the differential of $\iota$. In so doing, the ambient tangent space $T_{p}\mathbb{R}^{d}$ splits as the orthogonal direct sum $T_{p}M \oplus N_{p}M$, where $N_{p}M$ is the normal space of $M$ at $p$. Thus, the set $\mathfrak{X}(M)$ of tangent vector fields \emph{on} $M$ becomes a proper subset of the set of vector fields \emph{along} $M$, which we denote by $\overline{\mathfrak{X}}(M)$. If $X \in \mathfrak{X}(M)$ and $\varUpsilon \in \overline{\mathfrak{X}}(M)$,
\begin{equation*}
\overline{\nabla}_{X}\varUpsilon = (\overline{\nabla}_{X}\varUpsilon)^{\top} + (\overline{\nabla}_{X}\varUpsilon)^{\perp}\,,
\end{equation*}
where $\overline{\nabla}$ is the Euclidean connection, $\top$ and $\perp$ are the orthogonal projections onto the tangent and normal bundle of $M$, and where the vector fields $X$ and $\varUpsilon$ are extended arbitrarily to $\mathbb{R}^{d}$. It turns out that the map $\mathfrak{X}(M) \times \mathfrak{X}(M) \to  \mathfrak{X}(M)$ defined by
\begin{equation*}
(X,Y) \mapsto (\overline{\nabla}_{X}Y)^{\top}
\end{equation*}
is a linear connection on $M$, called the tangential connection. In fact, it is no other than the (intrinsic) Levi-Civita connection $\nabla$ of $(M,\iota^{\ast}\overline{g})$.

Similarly, indicating by $\mathfrak{X}(M)^{\perp}$ the set of normal vector fields along $M$, we define the normal connection on $M$ as the map $\mathfrak{X}(M) \times \mathfrak{X}(M)^{\perp} \to  \mathfrak{X}(M)^{\perp}$ given by
\begin{equation*}
(X,N) \mapsto (\overline{\nabla}_{X}N)^{\perp}\,.
\end{equation*}

Let us recall that an orthonormal frame along $\gamma$ is an $m$-tuple of smooth vector fields $(E_{i})_{i\,=\,1}^{m}$ along $\gamma$ such that $(E_{i}(t))_{i\,=\,1}^{m}$ is an orthonormal basis of $T_{\gamma(t)}M$ for all $t$. In particular, an orthonormal frame $(W_{1}, \dotsc, W_{d})$ along a curve $\iota \circ \gamma$ in $\mathbb{R}^{d}$ is said to be $M$-\textit{adapted} if $(W_{i})_{i\,=\,1}^{m}$ spans the ambient tangent bundle over $\gamma$.

In the remainder of this section, we assume that $M$ has codimension one in $\mathbb{R}^{d}$, i.e., that $d=m+1$. Under such hypothesis, given any orthonormal frame $(E_{i})_{i\,=\,1}^{m}$ along $\gamma$, we can construct an associated $M$-adapted orthonormal frame along $\iota \circ \gamma$ as follows. For $k=1, \dotsc, m$, let $W_{k} = E_{k}$; then, for $k=m+1$,
\begin{equation*}
W_{m+1} = E_{1} \times \dotsb \times E_{m}\,,
\end{equation*}
so that $(W_{1}, \dotsc, W_{m+1})$ is the unique extension of $(E_{i}(t))_{i\,=\,1}^{m}$ to a positively oriented, orthonormal frame along $\iota\circ\gamma$.

Denoting by $D_{t}$ and $\overline{D}_{t}$ the covariant derivative operators determined by $\nabla$ and $\overline{\nabla}$, respectively, we may write
\begin{equation} \label{FAC-EQ1}
\overline{D}_{t} E_{i} = D_{t} E_{i} +\tau_{i}W_{m+1} \,,
\end{equation}
for some smooth function $\tau_{i} \colon I \to \mathbb{R}$. Clearly, should $M$ be orientable, $\tau_{i} = \pm h(E_{1},E_{i})$, where $h$ is the (scalar) second fundamental form of $M$ determined by a choice of unit normal vector field. Moreover, it easily follows from orthonormality that
\begin{equation*}
\overline{D}_{t} W_{m+1} = -\tau_{1}E_{1} - \dotsb -\tau_{m}E_{m}\,.
\end{equation*}

\section{Developable Surfaces}
\noindent The main purpose of this section is to generalize to higher dimensions the following well-known fact about ruled surfaces in $\mathbb{R}^{3}$ -- see for example \cite[p.~194]{docarmo1976}:
\begin{lemma} \label{DH-LM1}
Let $I$, $J$ be open intervals. Further, let $\gamma$ and $X$ be curves $I \to \mathbb{R}^{3}$ such that the map $\sigma \colon I \times J \to \mathbb{R}^{3}$ given by
\begin{equation*}
\sigma(t,u) = \gamma(t)+ u X(t)
\end{equation*}
is a smooth injective immersion. Then the Gauss curvature of $\sigma(I \times J)$ is zero precisely when $\gamma$ and $X$ satisfy $\dot{\gamma} \cdot \dot{X} \times X = 0$.
\end{lemma}

We shall begin with some definitions extending the classical notions of ruled and torse surface to arbitrary dimension, yet keeping the codimension fixed to $1$. If $m \geq 2$, let $H$ be a hypersurface in $\mathbb{R}^{m+1}$, as always smooth and embedded.
\begin{definition} \label{DH-DEF2}
We say that $H$ is a \textit{ruled} surface if
\begin{enumerate}
\item \label{cond1} $H$ is free of planar points, that is, there exists no point of $H$ where the second fundamental form vanishes;
\item there exists a \textit{ruled structure on} $H$, that is, a foliation of $H$ by open subsets of $(m-1)$-dimensional affine subspaces of $\mathbb{R}^{m+1}$, called \textit{rulings}. 
\end{enumerate}
In particular, a ruled surface $H$ is said to be a \textit{torse surface} if, for every pair of points $(p,q)$ on the same ruling, we have $T_{p}H = T_{q}H$, i.e., if all tangent spaces of $H$ along a fixed ruling can be canonically identified with the same linear subspace of $\mathbb{R}^{m+1}$.
\end{definition}

\begin{remark} \label{DS-RMK3}
Although condition \ref{cond1} in Definition \ref{DH-DEF2} may seem overly restrictive, it gives any ruled surface $H$ a desirable property. Namely, it ensures the existence of a \emph{smooth} ruled parametrisation of $H$ \cite{ushakov1996}. On the other hand, we will also need to work with the broader class of \textit{generalised ruled hypersurfaces} obtained by relaxing such condition. It is well known that every generalised torse with planar points is made up of both standard torses and pieces of $m$-planes, always glued along a well-defined ruling.
\end{remark}

Remember that any $d$-dimensional Riemannian manifold locally isometric to $\mathbb{R}^{d}$ is said to be \textit{flat}. In particular, the classical term for hypersurfaces is \emph{developable}, see \cite[Section~1]{ushakov1999} for a detailed discussion on terminology. Remarkably, it turns out that

\begin{theorem}[{\cite[Theorem~1]{ushakov1999}}]
$H$ is a torse surface if and only if it is free of planar points and, when equipped with the induced metric $\iota^{\ast}\overline{g}$, $H$ becomes a flat Riemannian manifold.
\end{theorem}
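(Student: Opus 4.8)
The plan is to reduce both implications to a single pointwise statement about the rank of the shape operator $S$ of $H$, and then to analyse the kernel distribution $\Ker S$. Recall that for a hypersurface the Gauss equation reads $\langle R(X,Y)Z,W\rangle = h(X,Z)h(Y,W)-h(X,W)h(Y,Z)$, where $h$ is the scalar second fundamental form and $R$ the Riemann tensor of $(H,\iota^{\ast}\overline{g})$. Viewing $h$ pointwise as a symmetric bilinear form, the vanishing of $R$ is exactly the vanishing of all its $2\times 2$ minors, i.e.\ the condition $\Rank S \leq 1$. Since being \emph{free of planar points} means precisely $h\neq 0$, i.e.\ $\Rank S \geq 1$, I would first record the equivalence: $H$ is free of planar points and flat if and only if $\Rank S = 1$ at every point.

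For the forward implication, suppose $H$ is a torse. By Definition \ref{DH-DEF2} it is ruled, hence free of planar points. Fix a ruling $L$ and a local unit normal $N$; since the tangent hyperplane $T_{p}H$ is constant as $p$ ranges over $L$, so is its orthogonal complement $\Span(N)$, whence $N$ is constant along $L$ up to sign and $\overline{\nabla}_{v}N=0$ for every $v$ tangent to $L$. As $Sv=-\overline{\nabla}_{v}N$ (noting $\overline{\nabla}_{v}N$ is automatically tangent), the entire $(m-1)$-dimensional ruling direction lies in $\Ker S$, so $\Rank S\leq 1$; combined with freedom from planar points this gives $\Rank S=1$, and the equivalence above yields flatness.

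The substance of the theorem lies in the converse. Assuming $\Rank S=1$ everywhere, the kernel $\Delta=\Ker S$ is a smooth distribution of rank $m-1$, being the kernel of a constant-rank bundle map. I would then use the Codazzi equation $(\nabla_{X}S)Y=(\nabla_{Y}S)X$ to extract three facts. First, for $X,Y\in\Gamma(\Delta)$ one computes $S(\nabla_{X}Y)=S(\nabla_{Y}X)$, hence $S[X,Y]=0$, so $\Delta$ is integrable. Second, writing $e$ for a local unit section of $\Delta^{\perp}$ with $Se=\lambda e$ and $\lambda\neq 0$, the Codazzi identity applied to a pair $(X,e)$ with $X\in\Delta$ forces the $\Delta$-component of $\nabla_{X}e$ to vanish; dualising, $\langle\nabla_{X}Y,e\rangle=-\langle Y,\nabla_{X}e\rangle=0$ for $X,Y\in\Gamma(\Delta)$, so each leaf is totally geodesic in $H$. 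Third, since $h(X,Y)=\langle SX,Y\rangle=0$ on $\Delta$, the ambient second fundamental form of a leaf also vanishes, making each leaf totally geodesic in $\mathbb{R}^{m+1}$ and hence an open piece of an $(m-1)$-plane. These pieces foliate $H$ and provide a ruled structure; moreover $Sv=0$ along a leaf gives $\overline{\nabla}_{v}N=0$, so $T_{p}H=N^{\perp}$ is constant along each ruling, which is exactly the torse condition.

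I expect the main obstacle to be the converse, specifically the relative-nullity analysis of $\Delta=\Ker S$ via Codazzi: establishing simultaneously integrability, total geodesy in $H$, and flatness of the leaves in the ambient space. The delicate point is that constancy of $\Rank S$ is what makes $\Delta$ a smooth subbundle and keeps the leaf dimension uniform, so one must track precisely where $\lambda\neq 0$ enters. Since torse-ness, flatness, and the planar-point condition are all local and invariant under $N\mapsto -N$, I would run the argument in a neighbourhood where a smooth unit normal exists, thereby sidestepping any orientability assumption on $H$.
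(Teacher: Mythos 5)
Your argument is correct, but there is an important contextual point: the paper does not prove this statement at all --- it is quoted directly from Ushakov (\emph{Developable surfaces in Euclidean space}, cited as Theorem~1 there), so there is no in-paper proof to compare yours against. What you have written is a correct, self-contained proof along the standard lines of the literature. The reduction via the Gauss equation --- flatness plus absence of planar points is equivalent to $\Rank S = 1$, since $R = 0$ exactly when all $2\times 2$ minors of $h$ vanish --- is sound, and the forward direction (ruling directions lie in $\Ker S$ because the normal line is constant along a ruling) is immediate. The converse is indeed where the substance lies, and your relative-nullity analysis is the classical one: Codazzi gives integrability of $\Delta = \Ker S$ and, using that $\Ima S = \Delta^{\perp}$ together with $\lambda \neq 0$, the vanishing of the $\Delta$-component of $\nabla_{X}e$, hence total geodesy of the leaves in $H$; adding $h|_{\Delta \times \Delta} = 0$ makes the leaves totally geodesic in $\mathbb{R}^{m+1}$, i.e.\ open pieces of $(m-1)$-planes, along which $\overline{\nabla}_{v}N = 0$ recovers the torse condition. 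Two small points you handled correctly and should keep explicit in a final write-up: constancy of $\Rank S$ is what makes $\Delta$ a smooth subbundle (this is exactly where ``free of planar points'' enters the converse), and the orientability issue is harmless because all three conditions are local and invariant under $N \mapsto -N$. The only step left implicit is the standard fact that a connected totally geodesic $(m-1)$-dimensional submanifold of Euclidean space is an open subset of a single affine $(m-1)$-plane (local constancy of the osculating plane plus connectedness); this is routine but worth a sentence.
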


\begin{corollary} \label{DS-COR}
$H$ is a generalised torse surface if and only if the induced metric on $H$ is flat.
\end{corollary}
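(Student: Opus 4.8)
The plan is to reduce both directions to the preceding theorem, the only new feature being the presence of planar points. For the forward implication I would argue as follows: if $H$ is a generalised torse, its non-planar locus $H^{\circ}$ is open and, being a ruled hypersurface free of planar points along which the tangent space is stable, is a torse in the sense of Definition~\ref{DH-DEF2}; by the preceding theorem it is flat. At each planar point, on the other hand, the second fundamental form vanishes, so the Gauss equation forces every sectional curvature to vanish there as well. Since the non-planar locus and the planar points together exhaust $H$, the Riemann curvature tensor vanishes identically and $(H,\iota^{\ast}\overline{g})$ is flat.

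For the converse I would start from the decomposition of $H$ into the closed set $P$ of planar points and its open complement $H^{\circ}$. On $H^{\circ}$ the induced metric is flat and there are no planar points, so the preceding theorem makes every connected component a torse, equipped with a ruled structure whose tangent spaces are stable along the rulings. On the interior of $P$ the second fundamental form vanishes identically, so $H$ is totally geodesic there, and each component is therefore an open subset of an affine $m$-plane -- trivially ruled, with constant and hence stable tangent space. What remains is to assemble these pieces into a single generalised ruled structure on $H$ with stable tangent planes, which is exactly what it means for $H$ to be a generalised torse.

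The main obstacle is this last assembly, along the boundary $\partial P$ where the torse region meets the planar region and where an ad hoc matching of rulings is awkward. I would circumvent it by producing the rulings globally from a single canonical object. Indeed, flatness together with the Gauss equation implies that at every point of $H$ the shape operator has rank at most one, so its kernel -- the relative nullity distribution -- has dimension at least $m-1$ everywhere. The leaves of this distribution are open pieces of affine subspaces of $\mathbb{R}^{m+1}$ along which the tangent space of $H$ stays constant; they are $(m-1)$-dimensional over $H^{\circ}$, where they supply the rulings, and $m$-dimensional over the interior of $P$, where they fill out the plane pieces. This foliation furnishes the required generalised ruled structure at once, and the fact that it closes up correctly across $\partial P$ is precisely the well-known structure theory of developable hypersurfaces recalled in Remark~\ref{DS-RMK3} (see \cite{ushakov1999}).
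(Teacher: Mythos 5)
Your proposal is correct, and at bottom it follows the same route the paper (implicitly) takes: the paper states this corollary without any proof, treating it as an immediate consequence of Ushakov's theorem together with the decomposition of generalised torses into standard torses and plane pieces recalled in Remark~\ref{DS-RMK3}. Your write-up adds genuine content in two places. In the forward direction you avoid the decomposition altogether: restricting the ruled structure to the open non-planar locus gives honest torses, hence vanishing curvature there by the theorem, while at planar points the Gauss equation kills the curvature tensor pointwise; this is cleaner and fully self-contained. In the converse direction your observation that flatness plus the Gauss equation forces $\Rank A \leq 1$, so that the relative nullity distribution has dimension at least $m-1$, is the right organizing principle, but note two caveats: the nullity ``foliation'' is only a genuine smooth foliation on the open set where the rank of $A$ is locally constant (it degenerates on $\partial P$), and on the interior of the planar set its leaves are $m$-dimensional, so they do not by themselves constitute a ruled structure in the sense of Definition~\ref{DH-DEF2} --- the plane pieces must still be sub-foliated by $(m-1)$-planes compatible with the rulings arriving from the torse pieces. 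Both issues are exactly what the phrase ``glued along a well-defined ruling'' in Remark~\ref{DS-RMK3} encapsulates, so your appeal to that structure theory at the end is not a gap relative to the paper's own standard of rigor; it is the same citation the authors rely on.
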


Given a curve $\gamma$ in $\mathbb{R}^{m+1}$, the following result is key for constructing ruled surfaces containing $\gamma$. Note that in its statement we use the canonical isomorphism between $\mathbb{R}^{m+1}$ and any of its tangent spaces to identify the vector fields $X_{1}, \dotsc, X_{m-1}$ along $\gamma$ with curves in $\mathbb{R}^{m+1}$.

\begin{lemma}
Let $I$ be a closed interval. Let $\gamma \colon I \to \mathbb{R}^{m+1}$ be a smooth injective immersion. Let $(X_{1}, \dotsc, X_{m-1})$ be a smooth, linearly independent $(m-1)$-tuple of vector fields along $\gamma$ such that $\dot{\gamma}(t) \times X_{1}(t) \times \dotsb \times X_{m-1}(t) \neq 0$ for all $t \in I$. Then there exists an open box $V$ in $\mathbb{R}^{m-1}$ containing the origin such that the restriction to $I \times V$ of the map $\sigma \, \colon I \times \mathbb{R}^{m-1} \to \mathbb{R}^{m+1}$ defined by
\begin{equation*}
\sigma(t, u) = \gamma(t) + \sum\nolimits_{j} u^{j} X_{j}(t)
 \end{equation*}
is a smooth embedding.
\end{lemma}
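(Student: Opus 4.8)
The plan is to show that the map $\sigma$ is a local diffeomorphism near each point $(t_0, 0)$ and then upgrade this to an embedding on a suitable tube by a compactness argument. The central hypothesis $\dot\gamma \times X_1 \times \dotsb \times X_{m-1} \neq 0$ should be read, via the defining axioms of the vector cross product, as the statement that the $m$ vectors $\dot\gamma(t), X_1(t), \dotsc, X_{m-1}(t)$ are linearly independent for every $t$; indeed the second axiom says the norm of the cross product equals $\sqrt{\det(\langle v_i, v_j\rangle)}$, the Gram determinant, which is nonzero exactly when the vectors are linearly independent.

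First I would compute the differential of $\sigma$ at a point $(t, u)$. The partial derivatives are $\partial_t \sigma = \dot\gamma(t) + \sum_j u^j \dot X_j(t)$ and $\partial_{u^k}\sigma = X_k(t)$ for $k = 1, \dotsc, m-1$. Evaluating along the zero section $u = 0$, these become $\dot\gamma(t)$ and $X_1(t), \dotsc, X_{m-1}(t)$, which are precisely the $m$ vectors guaranteed linearly independent by the hypothesis. Hence $d\sigma_{(t,0)}$ has rank $m$, so $\sigma$ is an immersion at every point of $I \times \{0\}$. Since being an immersion is an open condition and the rank of $d\sigma$ is continuous, for each $t_0 \in I$ there is a neighbourhood of $(t_0, 0)$ on which $\sigma$ is an immersion; covering the compact set $I \times \{0\}$ by finitely many such neighbourhoods and taking an intersection of the $V$-directions, I obtain an open box $V_0 \ni 0$ such that $\sigma$ is an immersion on all of $I \times V_0$.

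Next I would promote the immersion property to injectivity on a possibly smaller box. Here I would invoke the fact that $\gamma$ itself is an injective immersion together with the linear independence on the zero section: the map $\sigma$ restricted to $I \times \{0\}$ is just $\gamma$, which is injective, and near the zero section $\sigma$ behaves like a tubular embedding. Concretely, I would argue by contradiction using compactness: if no box worked, there would be sequences $(t_n, u_n)$ and $(s_n, w_n)$ with $\sigma(t_n, u_n) = \sigma(s_n, w_n)$, $(t_n, u_n) \neq (s_n, w_n)$, and $u_n, w_n \to 0$; passing to convergent subsequences (using compactness of $I$) and using continuity gives $\gamma(t_\ast) = \gamma(s_\ast)$, whence $t_\ast = s_\ast$ by injectivity of $\gamma$, and then the local immersion/diffeomorphism property near $(t_\ast, 0)$ forces $(t_n, u_n) = (s_n, w_n)$ for large $n$, a contradiction. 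Intersecting the resulting box with $V_0$ yields a box $V$ on which $\sigma$ is both an injective immersion and, by compactness of $I$, a homeomorphism onto its image; this is exactly what it means for $\sigma|_{I \times V}$ to be a smooth embedding.

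I expect the main obstacle to be the injectivity step rather than the immersion step: passing from local injectivity (which is automatic near a point where the differential is invertible) to injectivity on a uniform tube requires genuine use of the compactness of $I$ and care with the interplay between the curve parameter $t$ and the ruling parameters $u$. The immersion computation, by contrast, is routine once the cross-product hypothesis is decoded as linear independence. A minor technical point worth handling cleanly is that $I$ is closed, so "neighbourhood" and "open box" should be understood relative to a slightly larger open interval on which $\gamma$ and the $X_j$ extend smoothly, or else one works with half-open neighbourhoods at the endpoints; either convention makes the compactness argument go through without change.
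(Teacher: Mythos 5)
Your proposal is correct and follows essentially the same route as the paper: first obtain a uniform box on which $\sigma$ is an immersion (decoding the cross-product hypothesis as linear independence and using compactness of $I$), then upgrade to an embedding using the fact that immersions are local embeddings together with injectivity of $\gamma$ as an embedded compact curve. Your explicit sequential-compactness contradiction argument for injectivity is in fact a fleshed-out version of what the paper compresses into ``proceed as before,'' and your closing caveat (shrinking the box so that its closure lies in the injectivity region, to pass from injective immersion to homeomorphism onto the image) is the same minor point the paper also leaves implicit.
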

\begin{proof}
To show that $\sigma$ restricts to an embedding, we first prove the existence of an open box $V_{1}$ such that $\sigma \rvert_{I\times V_{1}}$ is a smooth immersion. Essentially, the statement will then follow by compactness of $I$.

Obviously, $\sigma$ is immersive at $(t,u)$ if and only if the length $\ell \,\colon I \times \mathbb{R}^{m-1} \to \mathbb{R}$ of the cross product of the partial derivatives of $\sigma$ is non-zero at $(t,u)$. Thus, define $W_{t}$ to be the subset of $\{t\} \times \mathbb{R}^{m-1}$ where $\sigma$ is immersive. It is an open subset in $\mathbb{R}^{m-1}$ because it is the inverse image of an open set under a continuous map, $W_{t} = \ell(t,\cdot)^{-1}(\mathbb{R} \setminus \{0\})$; it contains $0$ by assumption. Thence, there exists an $\epsilon_{t} >0$ such that the open ball $B(\epsilon_{t},0) \subset \mathbb{R}^{m-1}$ is completely contained in $W_{t}$. Letting $\epsilon_{1} = \inf_{t \, \in \, I}(\epsilon_{t})$, we can conclude that the restriction of $\sigma$ to the box $I\times (-\epsilon_{1}/2,\epsilon_{1}/2)^{m-1}$ is a smooth immersion.

Now, being $\sigma$ a smooth immersion on $I \times V_{1}$, it follows that every point of $I \times V_{1}$ has a neighbourhood on which $\sigma$ is a smooth embedding. Let then $W_{t}'$ be the subset of $W_{t}$ where $\sigma$ is an embedding. It is open in $\mathbb{R}^{m-1}$, and it contains the origin because $\gamma$ is a smooth injective immersion of a compact manifold. From here we may proceed as before.
\end{proof}

Thus, for suitably chosen $(X_{1}, \dotsc, X_{m-1})$ and $V \subset \mathbb{R}^{m-1}$, we have verified that $H_{\sigma} = \Ima\sigma\rvert_{I\times V}$ is a hypersurface in $\mathbb{R}^{m+1}$, and $\mathscr{F}_{\sigma} = \{ \sigma(t,V) \}_{t \, \in \, I}$ a ruled structure on it. Under such hypothesis, let us assume $H_{\sigma}$ is orientable (this we can do, possibly limiting the analysis to an open subset). Then, we may pick out a smooth unit normal vector field $N$ along $H_{\sigma}$ by means of the $m$-fold cross product on $\mathbb{R}^{m+1}$, as follows. Letting
\begin{equation} \label{DH-EQ1}
Z = \frac{\partial \sigma}{\partial t} \times \frac{\partial \sigma}{\partial u^{1}} \times \dotsb \times \frac{\partial \sigma}{\partial u^{m-1}} \, ,
\end{equation}
define $\widehat{N} = Z \lvert Z \rvert^{-1}$, and so $N = \widehat{N} \circ \sigma^{-1}$. In this situation, assuming there are no planar points, $H_{\sigma}$ being a torse surface is equivalent to $N$ being constant along each of the rulings. Thus, indicating with $\overline{\nabla}$ the Euclidean connection on $\mathbb{R}^{m+1}$, $(H_{\sigma},\iota^{\ast}\overline{g})$ is flat if and only if, for all vector fields $X$ tangent to $\mathscr{F}_{\sigma}$ on $H_{\sigma}$: 
\begin{equation} \label{DH-EQ2}
 \overline{\nabla}_{X}N = 0\, .
\end{equation}
In fact, by linearity -- and writing $\partial_{j}$ as a shorthand for $\frac{\partial}{\partial u^{j}}$ -- it suffices that \eqref{DH-EQ2} holds for the vector fields $\sigma_{\ast}(\partial_{1}), \dotsc, \sigma_{\ast}(\partial_{m-1})$ spanning the distribution corresponding to $\mathscr{F}_{\sigma}$. We may thereby express the developability condition for $(H_{\sigma},\iota^{\ast}\overline{g})$ simply as
\begin{equation} \label{DH-EQ3}
\partial_{1}\widehat{N} = \dotsb = \partial_{m-1}\widehat{N} = 0\,,
\end{equation}
where we understand $\partial_{j}$ as acting on the coordinate functions $\widehat{N}^{1},\dotsc, \widehat{N}^{m+1}$ of $\widehat{N}$ in the standard coordinate frame of $T \mathbb{R}^{m+1}$.

The next lemma finally translates \eqref{DH-EQ3} into $m-1$ conditions involving the vector fields $X_{1}, \dotsc, X_{m-1}$ along $\gamma$, and represents the sought generalization of Lemma \ref{DH-LM1}. It says that $\iota^{\ast}\overline{g}$ is a flat Riemannian metric precisely when $\overline{D}_{t}X_{j} = D_{t}X_{j}$ for every $j$, or equivalently when each of the normal projections $(\overline{D}_{t}X_{1})^{\perp}, \dotsc, (\overline{D}_{t}X_{m-1})^{\perp}$ vanishes identically.
\begin{lemma}
Assume $\sigma\rvert_{I\times V}$ is a smooth embedding. The hypersurface $H_{\sigma}$ is a generalised torse surface if and only if the following equations hold:
\begin{align} \label{DH-EQ4}
\begin{split}
\dot{\gamma} \cdot \partial_{1} Z \equiv \dot{\gamma} \cdot \overline{D}_{t}X_{1} \times X_{1} \times \dotsb \times X_{m-1} &= 0 \\
&\mathrel{\makebox[\widthof{=}]{\vdots}} \\
\dot{\gamma} \cdot \partial_{m-1} Z \equiv \dot{\gamma} \cdot \overline{D}_{t}X_{m-1} \times X_{1} \times \dotsb \times X_{m-1} &= 0
\end{split}
\end{align}
\end{lemma}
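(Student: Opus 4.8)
The plan is to start from the developability criterion \eqref{DH-EQ3} already in hand: by Corollary \ref{DS-COR}, $H_{\sigma}$ is a generalised torse exactly when the induced metric is flat, which \eqref{DH-EQ2}--\eqref{DH-EQ3} translate into $\partial_{1}\widehat{N} = \dotsb = \partial_{m-1}\widehat{N} = 0$ on all of $I \times V$. The goal is to reduce this family of conditions to the $m-1$ scalar equations \eqref{DH-EQ4}, which involve only $\gamma$ and the $X_{j}$.

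First I would compute the partial derivatives of $\sigma$. As the $X_{j}$ depend on $t$ alone, $\partial\sigma/\partial u^{i} = X_{i}$ and $\partial\sigma/\partial t = \dot{\gamma} + \sum_{j} u^{j}\overline{D}_{t}X_{j}$, where $\overline{D}_{t}X_{j}$ is simply the derivative of the curve $X_{j}$ in $\mathbb{R}^{m+1}$. Substituting into \eqref{DH-EQ1} and using multilinearity of the cross product gives
\[
Z(t,u) = Z_{0}(t) + \sum\nolimits_{k} u^{k} Z_{k}(t), \quad Z_{0} = \dot{\gamma} \times X_{1} \times \dotsb \times X_{m-1}, \quad Z_{k} = \overline{D}_{t}X_{k} \times X_{1} \times \dotsb \times X_{m-1}.
\]
Thus $Z$ is affine in $u$, so $\partial_{k}Z = Z_{k}$ is independent of $u$ and coincides with the expression in \eqref{DH-EQ4}; moreover $Z_{0} = Z(t,0) \neq 0$ since $\sigma$ is an immersion.

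Next I would differentiate $\widehat{N} = Z\lvert Z\rvert^{-1}$, obtaining $\partial_{k}\widehat{N} = \lvert Z\rvert^{-1}\bigl(Z_{k} - \lvert Z\rvert^{-2}(Z\cdot Z_{k})Z\bigr)$, i.e.\ $\lvert Z\rvert^{-1}$ times the component of $Z_{k}$ orthogonal to $Z$. Hence $\partial_{k}\widehat{N} = 0$ iff $Z_{k}$ is parallel to $Z$. Since $\partial_{k}Z = Z_{k}$ does not depend on $u$, evaluating at $u=0$ shows \eqref{DH-EQ3} forces $Z_{k} \parallel Z_{0}$ for every $k$; conversely, if $Z_{k}\parallel Z_{0}$ for all $k$ then $Z(t,u)$ is a scalar multiple of $Z_{0}$, so $Z_{k}\parallel Z(t,u)$ and \eqref{DH-EQ3} holds on $I\times V$. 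Thus $H_{\sigma}$ is a generalised torse iff $Z_{k}\parallel Z_{0}$ for $k=1,\dotsc,m-1$. It remains to identify this with $\dot{\gamma}\cdot Z_{k} = 0$: by the first axiom of the vector cross product each $Z_{k}$ (and $Z_{0}$) is orthogonal to every $X_{i}$, while $Z_{0}$ is also orthogonal to $\dot{\gamma}$. If $Z_{k}\parallel Z_{0}$ then $\dot{\gamma}\cdot Z_{k}=0$ at once; conversely $\dot{\gamma}\cdot Z_{k}=0$ together with $X_{i}\cdot Z_{k}=0$ gives $Z_{k}\perp\Span(\dot{\gamma},X_{1},\dotsc,X_{m-1})$, and since $Z_{0}\neq0$ these $m$ vectors are independent with orthogonal complement the line spanned by $Z_{0}$, whence $Z_{k}\parallel Z_{0}$.

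I expect the main obstacle to be the reduction in the third paragraph. A priori \eqref{DH-EQ3} is imposed at every $(t,u)$ and on the normalised field $\widehat{N}$, and expanding $\partial_{k}\widehat{N}=0$ in powers of $u$ seems to yield extra equations beyond \eqref{DH-EQ4}. What saves the statement is that the affine dependence of $Z$ on $u$, together with $Z_{0}\neq0$, collapses all these conditions to their values at $u=0$, after which the orthogonality axiom identifies ``parallel to $Z_{0}$'' with ``orthogonal to $\dot{\gamma}$'' inside the two-dimensional space $\Span(X_{1},\dotsc,X_{m-1})^{\perp}$.
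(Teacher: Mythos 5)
Your proof is correct and follows essentially the same route as the paper's: compute $Z$ (affine in $u$) to identify $\partial_{j}Z = \overline{D}_{t}X_{j} \times X_{1} \times \dotsb \times X_{m-1}$, differentiate $\widehat{N} = Z\lvert Z\rvert^{-1}$ by the quotient rule, and use the orthogonality axiom of the cross product to turn the resulting parallelism condition into $\dot{\gamma} \cdot \partial_{j}Z = 0$ (your ``$Z_{k} \parallel Z$'' is the same as the paper's ``$(\partial_{j}Z)^{\top} = 0$'', since $Z$ spans the normal line of $H_{\sigma}$). If anything, you are more explicit than the paper on one point it leaves implicit: the reduction of conditions imposed at every $(t,u)$ to conditions at $u=0$ only, via the affine dependence of $Z$ on $u$ and $Z_{0} \neq 0$.
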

\begin{proof}
Computing the partial derivatives of $\sigma$ and substituting them into the expression \eqref{DH-EQ1} for $Z$, we get:
\begin{equation*}
Z(t,u)= \{ \dot{\gamma}(t) + u^{i} \overline{D}_{t}X_{i}(t) \} \times X_{1}(t) \times \dotsb \times X_{m-1}(t)\,,
\end{equation*}
from which the identity $\partial_{j}Z \equiv \overline{D}_{t}X_{j} \times X_{1} \times \dotsb \times X_{m-1}$ clearly follows. Thus, we need to prove that $\partial_{1}\widehat{N} = \dotsb = \partial_{m-1}\widehat{N} = 0$ if and only if $\partial_{1}Z \cdot \dot{\gamma} = \dotsb = \partial_{m-1}Z \cdot \dot{\gamma} = 0$. In fact, for $\partial_{j}Z$ is orthogonal to $X_{1}, \dotsc, X_{m-1}$, it is enough to check that $\partial_{1}\widehat{N} = \dotsb = \partial_{m-1}\widehat{N} = 0$ if and only if $(\partial_{1}Z )^{\top}= \dotsb = (\partial_{m-1}Z)^{\top}= 0$. First, assume $\partial_{j}\widehat{N} = 0$. Since $\widehat{N} = Z \lvert Z \rvert^{-1}$, it follows by linearity of the tangential projection that
\begin{equation*}
\lvert Z \rvert (\partial_{j}Z)^{\top} - Z^{\top} \partial_{j} \lvert Z \rvert = 0\,,
\end{equation*}
which is true exactly when $(\partial_{j}Z)^{\top} = 0$, as desired. To verify the converse, note that $(\partial_{j}N)^{\perp} = 0$ because $N$ has unit length. Thus, again by linearity of $\top$,
\begin{equation*}
\partial_{j}\widehat{N}= \frac {(\partial_{j}Z)^{\top}\lvert Z\rvert - Z^{\top}\partial_{j}\lvert Z \rvert}{\lvert Z\rvert^{2}}\,.
\end{equation*}
Since $Z^{\top}=0$, the claim follows.
\end{proof}

\section{Proof of the Main Result}

\noindent Here we prove our main result, stated in Theorem \ref{mainresult} in the Introduction. The proof is constructive and is based on the fact that an Euclidean hypersurface without planar points has a flat induced metric precisely when it is a torse surface (Theorem 1.3). Let $M$ be a hypersurface in $\mathbb{R}^{m+1}$ and $\gamma$ a smooth curve in $M$, as defined at the beginning of Section $3$. Denoting by $\mathfrak{X}(\gamma)$ the set of smooth, non-vanishing vector fields along $\gamma$, define an equivalence relation on the $n$-th Cartesian power $\mathfrak{X}(\gamma)^{n}$ of $\mathfrak{X}(\gamma)$ by the following rule: 
\begin{equation*}
\{(X_{1},\dotsc,X_{n}) \sim (Y_{1},\dotsc,Y_{n})\} \Leftrightarrow \{\Span (X_{1},\dotsc,X_{n}) =\Span (Y_{1},\dotsc,Y_{n})\}.
\end{equation*}
Let us indicate an element of the quotient $\mathfrak{X}(\gamma)^{n}/{\sim}$, that is, an element of $\mathfrak{X}(\gamma)^{n}$ up to equivalence, by $[X_{1},\dotsc,X_{n}]$. We wish to find $[X_{1}, \dotsc, X_{m-1}]$ such that, for every $t \in I$ and integer $j$ with $1 \leq j \leq m-1$, both the conditions
\begin{align}
&\dot{\gamma} \cdot \overline{D}_{t}X_{j} \times X_{1} \times \dotsb \times X_{m-1} = 0 \label{PMR-EQ1} \\
&\dot{\gamma}(t) \times X_{1}(t) \times \dotsb \times X_{m-1}(t) \neq 0 \label{PMR-EQ2}
\end{align}
are satisfied. Beware that, throughout this section, we will extensively use Einstein summation convention: every time the same index appears twice in any monomial expression, once as an upper index and once as a lower index, summation over all possible values of that index is understood. 

Once and for all, let us choose a $\gamma$\textit{-adapted} orthonormal frame $(E_{1}, \dotsc, E_{m})$ along $\gamma$: this is just an orthonormal frame along $\gamma$ whose first element coincides with the tangent vector $\dot{\gamma}$. The first step is to rewrite \eqref{PMR-EQ1} as an equation involving the $m(m-1)$ coordinate functions $X_{j}^{i}$ of $X_{1},\dotsc,X_{m-1}$ with respect to $(E_{1}, \dotsc, E_{m})$. Differentiating covariantly $X_{j} = X_{j}^{i}E_{i}$ and substituting, we obtain
\begin{equation} \label{PMR-EQ3}
E_{1} \cdot (\overline{D}_{t}X_{j}^{i}E_{i} + X_{j}^{i} \overline{D}_{t}E_{i}) \times X_{1}^{i}E_{i} \times \dotsb \times X_{m-1}^{i}E_{i} = 0 \,,
\end{equation}
whereas, from \eqref{FAC-EQ1}.
\begin{align*}
\sum_{i\, =\, 1}^{m} \overline{D}_{t}E_{i}&=\sum_{i \, =\, 1}^{m} D_{t}E_{i}+ E_{m+1}\sum_{i \,=\, 1}^{m}\tau_{i} \\
&= \sum_{i \, =\, 1}^{m} \left\{ (D_{t}E_{i} \cdot E_{1}) E_{1} + \dotsb + (D_{t}E_{i} \cdot E_{m}) E_{m}\right\} + E_{m+1}\sum_{i \,=\, 1}^{m}\tau_{i} \,.
\end{align*}
Now, given any ordered $m$-tuple $(i_{1},\dotsc, i_{m})$ of integers with $1 \leq i_{1} \leq m+1$ and $1 \leq i_{k} \leq m$ for $k = 2,\dotsc,m$, a necessary condition for the $m$-fold cross product $E_{i_{1}}\times \dotsb \times E_{i_{m}}$ to give either $E_{1}$ or $-E_{1}$ is that $i_{1} = m+1$ and $i_{k} \neq 1$. It follows that \eqref{PMR-EQ3} is equivalent to
\begin{equation} \label{PMR-EQ4}
E_{1} \cdot X_{j}^{i}\tau_{i}E_{m+1} \times (X_{1}^{2}E_{2} + \dotsb + X_{1}^{m}E_{m}) \times \dotsb \times (X_{m-1}^{2}E_{2} + \dotsb + X_{m-1}^{m}E_{m}) = 0\,.
\end{equation}
In fact, $E_{i_{1}}\times \dotsb \times E_{i_{m}} = \pm E_{1}$ if and only if $i_{1} = m+1$ and the $(m-1)$-tuple $(i_{2},\dotsc, i_{m})$ is a permutation of $(2,\dotsc,m)$. In particular, if it is an \emph{even} permutation, then the basis $(E_{m+1},E_{i_{2}},\dotsc,E_{i_{m}},E_{1})$ is \emph{negatively} oriented, for transposing $E_{m+1}$ and $E_{1}$ must give a positive basis, and so $E_{i_{1}}\times \dotsb \times E_{i_{m}} = -E_{1}$. Thence, denoting by $S_{m}^{2}$ the group of permutations $\sigma$ of $(2,\dotsc,m)$, we may write \eqref{PMR-EQ4} simply as
\begin{equation*}
-X_{j}^{i}\tau_{i} \sum_{\sigma \,\in\, S_{m}^{2}} \Sign(\sigma) X_{1}^{\sigma(2)} \dotsm X_{m-1}^{\sigma(m)} = 0\,.
\end{equation*}
On the other hand, a similar computation would reveal that condition \eqref{PMR-EQ2} is satisfied for every $t$ if and only if the summation term above (the term independent of $j$) never vanishes. We may thereby conclude that, under the assumption of \eqref{PMR-EQ2} being true, condition \eqref{PMR-EQ1} is equivalent to $X_{j}^{i}\tau_{i} = 0$.

Next, consider the set $\mathscr{Z} \subset \mathfrak{X}(\gamma)$ of smooth vector fields $Z$ along $\gamma$ such that $Z^{1}(t) = Z \cdot E_{1}(t) \neq0$ for every $t$. We establish a bijection between its quotient $\mathscr{Z}/{\sim}$ by $\sim$ and the subset of $\mathfrak{X}(\gamma)^{m-1}/{\sim}$ where \eqref{PMR-EQ2} holds. For every $j$, let
\begin{equation} \label{PMR-EQ5}
X_{j}(Z) = Z \times E_{2} \times \dotsb \times \widetilde{E}_{m-j+1} \times \dotsb \times E_{m}\,,
\end{equation}
where the tilde indicates that $E_{m-j+1}$ is omitted, so that the cross product is $(m-1)$-fold. For example, when $j=1$, we omit the last vector field $E_{m}$; when $j=2$ the second to last, and so on, until dropping $E_{2}$ for $j=m-1$. Linear independence of $E_{1},X_{1}(Z), \dotsc, X_{m-1}(Z)$ is easily seen, as by definition $Z$ is never in the span of $E_{2},\dotsc,E_{m}$. Since the normal projection $Z \mapsto Z^{\perp}$ induces a bijection between $\mathscr{Z}/{\sim}$ and the set of smooth $(m-1)$-distributions along $\gamma$ nowhere parallel to $E_{1}$, it follows that the map $[Z] \mapsto [X_{1}(Z),\dotsc,X_{m-1}(Z)]$ between classes of equivalence is indeed a valid parametrisation of the solution set of \eqref{PMR-EQ2}.

We then compute the coordinates of the cross product in \eqref{PMR-EQ5} with respect to the frame $(E_{1},\dotsc,E_{m})$. Substituting $Z = Z^{i}E_{i}$, all but the terms $Z^{1}E_{1}$ and $Z^{m-j+1}E_{m-j+1}$ will not give any contribution. In particular, $E_{1} \times \dotsb \times \widetilde{E}_{m-j+1} \times \dotsb \times E_{m} = \pm E_{m-j+1}$ depending on whether $(E_{1}, \dotsc, \widetilde{E}_{m-j+1}, \dotsc, E_{m}, E_{m-j+1})$ is positively or negatively oriented. Since the corresponding permutation of $(1,\dotsc, m)$ has sign $(-1)^{j-1}$, we conclude that $X_{j}^{m-j+1}(Z) = (-1)^{j-1}Z^{1}$. An analogous argument would show that $X_{j}^{1}(Z) = (-1)^{j}Z^{m-j+1}$.

Summing up, solving the original problem on $\mathfrak{X}(\gamma)^{m-1}/{\sim}$ essentially amounts to finding $[Z] \in \mathscr{Z}/{\sim}$ such that $X_{j}^{i}(Z)\tau_{i} = 0$ for every $j$. Moreover, by the previous computation,
\begin{equation*}
X_{j}^{i}(Z)\tau_{i} = (-1)^{j}Z^{m-j+1}\tau_{1}+(-1)^{j-1}Z^{1}\tau_{m-j+1}\,.
\end{equation*}

Thus, denoting again by $\sim$ the equivalence relation on $C^{\infty}(I)^{m} = C^{\infty}(I;\mathbb{R}^{m})$ naturally induced from the one on $\mathfrak{X}(\gamma)$, we need to look for $(Z^{1}, \dotsc,Z^{m})$, up to equivalence, satisfying the following system of $m-1$ linear equations on $C^{\infty}(I;\mathbb{R}_{\neq0}) \times C^{\infty}(I)^{m-1}$:
\begin{align}
\begin{split} \label{PMR-EQ6}
Z^{m}\tau_{1}-Z^{1}\tau_{m} &=0\\
Z^{m-1}\tau_{1}-Z^{1}\tau_{m-1} &=0\\
&\mathrel{\makebox[\widthof{=}]{\vdots}} \\
Z^{3}\tau_{1}-Z^{1}\tau_{3} &=0\\
Z^{2}\tau_{1}-Z^{1}\tau_{2} &=0\,.
\end{split}
\end{align}
Assume $\tau_{1}(t) \neq0$ for all $t$. Then, for any given $Z^{1}$ (remember $Z^{1}$ is non-vanishing by definition), the system has solution
\begin{equation*}
\frac{Z^{1}}{\tau_{1}}\left(\tau_{1},\dotsc,\tau_{m}\right).
\end{equation*}
However, it is easy to see that all solutions are in one and the same equivalence class. Indeed, if $f$ and $g$ are two distinct values of $Z^{1}$, then
\begin{equation*}
\frac{\tau_{i}}{\tau_{1}}f = \frac{f}{g}\frac{\tau_{i}}{\tau_{1}}g\,.
\end{equation*}

In particular, letting $Z^{1}=\tau_{1}$, we obtain $Z^{i} = \tau_{i}$ for every $i=1,\dotsc,m$, and the solution of the original problem on $\mathfrak{X}(\gamma)^{m-1}/{\sim}$ is given by
\begin{align*}
X_{1} &= -\tau_{m}E_{1} + \tau_{1}E_{m}\\
X_{2} &= \tau_{m-1}E_{1} - \tau_{1}E_{m-1}\\
&\mathrel{\makebox[\widthof{=}]{\vdots}} \\
X_{m-2} &= (-1)^{m-2} \tau_{3}E_{1} + (-1)^{m-3}\tau_{1}E_{3}\\
X_{m-1} &= (-1)^{m-1} \tau_{2}E_{1} + (-1)^{m-2}\tau_{1}E_{2}\,.
\end{align*}

As for uniqueness, in view of Remark \ref{DS-RMK3}, it is sufficient to show that the condition $\tau_{1}(t) \neq 0$ for all $t$ implies any flat approximation $H$ of $M^{m}$ along $\gamma$ be free of planar points, i.e., be a torse surface. To see this, let $N_{H}$ and $N_{M}$ be smooth unit normal vector fields along $H$ and $M^{m}$, respectively, defined in a neighbourhood of $\gamma(t)$. Then, $\overline{D}_{t}N _{H} =\overline{D}_{t}N _{M}$. Since $H$ is a generalised torse surface by Corollary \ref{DS-COR}, the claim easily follows.
%
%

\section{Construction of an Adapted Frame}
\noindent As seen in the last section, the construction of the flat approximation of $M$ along $\gamma$ requires choosing some $\gamma$-adapted orthonormal frame $(E_{i})_{i \,= \,1}^{m}$ along $\gamma$. We emphasize that such a choice is completely arbitrary. If the curve in question satisfies some (rather strong) conditions on its derivatives, then a natural generalization of the classical Frenet--Serret frame is available. The reader may find details on this construction in \cite{spivak1999} or \cite{kuhnel2015}. Here we briefly review an alternative approach, one that does not require any initial assumption on the curve. Such approach is due to Bishop \cite{bishop1975}.

First of all, since the problem is local, we are free to assume that $\gamma$ is a smooth embedding. Thus, for any point $p \in S = \gamma(I)$, there exist slice coordinates $(x_{1},\dotsc,x_{m})$ in a neighbourhood $U$ of $p$. It follows that $(\partial_{1}\lvert_{p}, \dotsc, \partial_{m}\lvert_{p})$ is a $\gamma$-adapted basis of $T_{p}M$, i.e., it satisfies $T_{p}S = \Span \partial_{1}\lvert_{p}$ and $N_{p}S = \Span(\partial_{2}\lvert_{p}, \dotsc, \partial_{m}\lvert_{p})$. By applying the Gram--Schmidt process to these vectors, one obtains an orthonormal basis $(n_{j})$ of $N_{p}S$. Although this basis is by no means canonical, the normal connection $\nabla^{\perp}$ of $S$ provides an obvious means for extending it to a frame for the normal bundle of $S$: for each $j$, let $\varUpsilon_{j}$ be the unique normal parallel vector field along $\gamma$ such that $\varUpsilon_{j}\lvert_{p} = n_{j}$ -- see \cite[p.~119]{oneill1983}. Because normal parallel translation is an isometry, the frame $(\dot{\gamma}, \varUpsilon_{1}, \dotsc, \varUpsilon_{m-1})$ is an orthonormal adapted frame along $\gamma$, as desired.

\bibliographystyle{unsrt}
\bibliography{NewText}
\end{document}